
\documentclass{gLMA2e}

\usepackage{epstopdf}
\usepackage{subfigure}

\theoremstyle{plain} 
\newtheorem{theorem}{Theorem}[section]

\newtheorem{lemma}[theorem]{Lemma}
\newtheorem{proposition}[theorem]{Proposition}

\theoremstyle{definition}
\newtheorem{definition}{Definition}
\newtheorem{example}{Example}

\theoremstyle{remark}

\newcommand{\reals}{\mathbb{R}}
\newcommand{\ints}{\mathbb{I}\reals}


\usepackage{amsmath}
\usepackage{amssymb}
\usepackage{dsfont}
\usepackage{enumitem}
\usepackage{graphicx}
\usepackage{color}

\def\inum#1{\protect{\textrm{\boldmath $#1$}}} 

\def\Mid#1{#1^c}
\def\Rad#1{#1^\Delta}
\hyphenpenalty=1000000
\exhyphenpenalty=100000



\begin{document}\sloppy



\title{Positive definiteness and stability of parametric interval matrices}

\author{
\name{Iwona Skalna$^{\ast}$\thanks{$^{\ast}$Corresponding author. Email: skalna@agh.edu.pl}}
\affil{AGH University of Science and Technology\\ ul. Gramatyka 10, 30-067 Krak{\'o}w, Poland \\
skalna@agh.edu.pl}
}
\pagestyle{myheadings}
\markboth{I. Skalna}{Positive definiteness and stability of parametric interval matrices}

\maketitle
%
%
\begin{abstract}
We investigate positive definiteness, Hurwitz stability and Schur stability of parametric interval matrices. We give a~verifiable sufficient condition for positive definiteness of parametric interval matrices with non-linear dependencies. We also give several sufficient and necessary conditions for stability of symmetric parametric interval matrices with affine-linear dependencies. The presented results extend the results on positive definiteness and stability of interval matrices. In addition, we provide a~formula for the radius of stability of symmetric parametric interval matrices.
\end{abstract}


\begin{keywords}
parametric interval matrix; positive definiteness; Hurwitz stability; Schur stability; verifiable sufficient and necessary conditions
\end{keywords}

\begin{classcode}
15B99; 65G40
\end{classcode}



\bibliographystyle{plain}
\section{Introduction}
\label{chap:intro}
As it is well known, algebraic properties of real matrices, such as positive definiteness or stability can be verified in polynomial time using, e.g., Gaussian Elimination or Singular Value Decomposition. However, similar problems for interval matrices, and thus also for parametric interval matrices (note that an~$n$-dimensional parametric interval matrix can be considered as interval matrices with $n^2$ parameters) are, in general, NP-hard (see, e.g., \cite{Niemirowski:1993:SNP}, \cite{PoljakRohn:1993:CRNNP}, \cite{Rohn:1994:NPHR}). Therefore, in practical computation we must resort to sufficient and necessary condition that can be verified in a~reasonable amount of time.

In this paper we study positive definiteness, Hurwitz stability and Schur stability of parametric interval matrices. First, in Section \ref{sec:pos_def}, we give a~verifiable sufficient condition for positive definiteness of parametric interval matrices with both affine-linear and non-linear dependencies. We show the usefulness of the proposed approach for the interval global optimisation. 

Next, we present several sufficient and necessary conditions for Hurwitz stability (Section \ref{sec:stability}) and Schur stability (Section \ref{sec:shur_stability}) of symmetric parametric interval matrices. We present a~sufficient condition, which can be expected to work well in practical applications.

Finally, in Section \ref{sec:rad_stab}, we prove that the radius of stability of a~symmetric parametric interval matrix is equal to its radius of regularity \cite{Kolev:2014:RRRER}.
%
%
\section{Notation and auxiliary results}
\label{sec:pim}
Throughout the paper, italic fonts are used to write real quantities, whereas bold italic fonts denote their interval counterparts. $\ints$ stands for the set of all real compact intervals, $\ints=\{\inum{x}=[\underline x,\overline x]\;|\;\underline x,\overline x\in\reals,\underline x\leqslant\overline x\}$. $\ints^n$ and $\ints^{n\times n}$ denote, respectively, the set of all interval vectors and the set of all interval matrices. The midpoint $\Mid{x}=(\underline x+\overline x)/2$ and the radius $\Rad{x}=(\overline x-\underline x)/2$ are applied to interval vectors and matrices componentwise. The minimal and maximal module of an interval $\inum{x}$ are defined, respectively, as $\langle\inum{x}\rangle=\min\{|x|\;|\;x\in\inum{x}\}$ and $|\inum{x}|=\max\{|x|\;|\;x\in\inum{x}\}$. The identity matrix of any size is denoted by~$I$, whereas $\rho(\cdot)$ and $\lambda$ stand, respectively, for the spectral radius and an eigenvalue of a~real matrix. 

\subsection{Parametric interval matrix}
\label{subs:pim}
A \emph{parametric interval matrix} $A(\inum{p})$, $\inum{p}\in\mathbb{IR}^K$, is defined as the following family of real parametric matrices
\begin{equation}
\label{eq:pim}
A(\inum{p})=\left\{A(p)\in\reals^{n\times n}\;|\;p\in\inum{p}\right\}.
\end{equation}
For each $p\in\inum{p}$, the elements of $A(p)$ are real valued functions of a~$K$-dimensional vector of parameters $p=(p_1,\ldots,p_K)\in\reals^K$, i.e. for $i,j=1,\ldots,n$,
\begin{equation}
\label{eq:pm_elem}
A_{ij}:\reals^K\ni (p_1,\ldots,p_K)\rightarrow A_{ij}(p_1,\ldots,p_K)\in\reals.
\end{equation}
Functions $A_{ij}$ ($i,j=1,\ldots,n$) can be generally divided into nonlinear and affine-linear. In the nonlinear case, it is usually assumed that $A_{ij}$ are continuous and differentiable on $\inum{p}$. In practical computation $A_{ij}$ are often given by closed-form expressions. 

In the affine-linear case, a~parametric interval matrix $A(\inum{p})$ can be represented as
\begin{equation}
\label{eq:pim_ldep}
A(\inum{p})=\left\{A^{(0)}+\textstyle\sum_{k=1}^KA^{(k)}p_k\;|\;p\in\inum{p}\right\},
\end{equation}
where $A^{(k)}\in\reals^{n\times n}$, $k=0,\ldots,K$.

The affine-linear case is by far not easy to handle, but to deal with nonlinear dependencies, some sophisticated methods for bounding the ranges of multivariate functions over a~box are required in addition (e.g., \cite{Neumaier:2015:IERF}). In \cite{Skalna:2017:SRPIM}, we have shown that with the use of affine arithmetic (AA) \cite{FigueiredoStolfi:1997:SVN}, the nonlinear case can be reduced to the affine-linear one. In what follows we will refer to this process as \textit{affine transformation}. Affine transformation of a~parametric interval matrix with affine-linear dependencies will be referred to as \textit{normalisation}. Obviously affine transformation cause some loss of information, but instead makes the computation far more simple.  
%
\subsection{Regularity}
In general, a~parametric interval matrix $A(\inum{p})$ has a~certain property if all matrices $A(p)\in A(\inum{p})$ have that property. One of the most important properties of a~parametric interval matrix is its regularity.
\begin{definition}
\label{def:regpim}
An $n\times n$ parametric interval matrix $A(\inum{p})$ is regular if for each $p\in\inum{p}$ the matrix $A(p)$ is nonsingular.
\end{definition}

In the following theorem, we give a~verifiable sufficient condition for checking the regularity of parametric interval matrices. This theorem generalises the well-known result (due to Beeck \cite{Beeck:1975:ZPHI}) for interval matrices.
\begin{theorem}
\label{thm:reg_rho_cond}
Let $A(\inum{p})$ be a~square parametric interval matrix such that $A(\Mid{p})$ is non-singular and let $B=\left(\sum_{k=1}^K|A^{(k)}|p^{\Delta}_k\right)$. If
\begin{equation}
\label{eq:reg_rho_cond}
\rho\left(\left|A\left(\Mid{p}\right)^{-1}\right|B\right)<1,
\end{equation}
then $A(\inum{p})$ is regular.
\end{theorem}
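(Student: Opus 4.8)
The plan is to argue by contradiction and reduce everything to the Perron--Frobenius theory of nonnegative matrices, in the same spirit as Beeck's original proof for interval matrices, with the affine-linear structure \eqref{eq:pim_ldep} used to control the deviation of $A(p)$ from the midpoint matrix $A(\Mid{p})$. Suppose $A(\inum{p})$ were not regular. Then by Definition~\ref{def:regpim} there is a parameter vector $p\in\inum{p}$ for which $A(p)$ is singular, so that $A(p)x=0$ for some $x\neq 0$.

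First I would exploit the affine-linear dependence to write $A(p)=A(\Mid{p})+\sum_{k=1}^K A^{(k)}(p_k-\Mid{p}_k)$, so that $A(p)x=0$ rearranges to $A(\Mid{p})x=-\sum_{k=1}^K A^{(k)}(p_k-\Mid{p}_k)x$. Since $A(\Mid{p})$ is nonsingular by hypothesis, I may solve for $x$ and pass to componentwise absolute values; the triangle inequality together with the bound $|p_k-\Mid{p}_k|\leqslant p^\Delta_k$, valid for every $p\in\inum{p}$, then yields
\begin{equation}
|x|\leqslant\left|A(\Mid{p})^{-1}\right|\sum_{k=1}^K|A^{(k)}|\,p^\Delta_k\,|x|=\left|A(\Mid{p})^{-1}\right|B\,|x|,
\end{equation}
where $B$ is exactly the matrix in the statement. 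Writing $M=\left|A(\Mid{p})^{-1}\right|B$, this is a componentwise inequality $|x|\leqslant M|x|$ with $M\geqslant 0$, $|x|\geqslant 0$, and $|x|\neq 0$.

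The final step is to conclude $\rho(M)\geqslant 1$, which contradicts the assumption \eqref{eq:reg_rho_cond}. Because $M$ is nonnegative it preserves the componentwise order, so applying it repeatedly to $|x|\leqslant M|x|$ gives $|x|\leqslant M^n|x|$ for every $n$. If one had $\rho(M)<1$, then $M^n\to 0$, hence $M^n|x|\to 0$, forcing $|x|\leqslant 0$ and therefore $x=0$, a contradiction. Thus $\rho(M)\geqslant 1$, which contradicts the hypothesis, and so $A(\inum{p})$ must be regular.

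I expect the passage to absolute values and the use of the bound $|p_k-\Mid{p}_k|\leqslant p^\Delta_k$ to be entirely routine; the only point that genuinely needs care is the last step, where the nonnegativity of $M$ and the Perron--Frobenius fact that a nonzero nonnegative subinvariant vector forces $\rho(M)\geqslant 1$ must be invoked correctly. One should also note explicitly that $x\neq 0$ gives $|x|\neq 0$, so the iteration argument is not vacuous.
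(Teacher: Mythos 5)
Your proof is correct and takes essentially the same approach as the paper's: contradiction via a singular $A(p)\in A(\inum{p})$, the componentwise bound $|x|\leqslant\left|A(\Mid{p})^{-1}\right|B\,|x|$ obtained from the affine-linear decomposition and $|p_k-\Mid{p}_k|\leqslant\Rad{p}_k$, and the Perron--Frobenius conclusion that a nonzero nonnegative subinvariant vector forces spectral radius at least $1$. The only differences are cosmetic: you apply $A(\Mid{p})^{-1}$ first and so work directly with $M=\left|A(\Mid{p})^{-1}\right|B$ (the paper instead derives $1\leqslant\rho\left(B\left|A(\Mid{p})^{-1}\right|\right)$ and then commutes the factors), and you prove the subinvariance fact by iterating $M$ and letting $M^n\to 0$, whereas the paper cites it from Neumaier.
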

\begin{proof}
Suppose to the contrary that $A(\inum{p})$ is singular. Then, there is a~nontrivial vector $x\in\mathbb{R}^n$ such that $A(p)x=0$. Hence
$$
\left|A(\Mid{p})x\right|=\left|(A(\Mid{p})-A(p))x\right|
=\left|\left\{\sum_{i=1}^KA^{(k)}(\Mid{p}_k-p_k)\right\}\right|\leqslant B|x|.
$$
Without loss of generality we can assume that $x=A(\Mid{p})^{-1}y$, for some $y\neq 0$. Hence,
$$
|y|\leqslant B|A(\Mid{p})^{-1}||y|,
$$
and thus (Corollary 3.2.3 and Proposition 3.2.4, \cite{Neumaier:1990:ILS}) 
$$
1\leqslant \rho\left(B\left|A(\Mid{p})^{-1}\right|\right)=
\rho\left(\left|A(\Mid{p})^{-1}\right|B\right),
$$ 
which yields a~contradiction.
\end{proof}
The above sufficient condition can be verified in polynomial time even with a~large number of parameters. The spectral decomposition can be done in $\mathcal{O}(n^3)$ (for symmetric matrices even faster using the Wilkinson's algorithm, which has $\mathcal{O}(n^2)$ time complexity), matrix inverse can also be done in $\mathcal{O}(n^3)$ (or even faster using, e.g. Coppersmith-Winograd algorithm has $\mathcal{O}(n^{2.376})$ time complexity). Hence, we obtain the $\mathcal{O}(n^3)$ time complexity. In order to avoid rounding errors in numerical computation, instead of the condition (\ref{eq:reg_rho_cond}) we recommend to use  the condition given in \cite{Skalna:2017:SRPIM} (formula (9)) with $R\approx A(\Mid{p})^{-1}$.

The regularity property gives us some quantitative knowledge about the matrix. Whereas sometimes, as pointed by Kolev \cite{Kolev:2014:RRRER}, it might be useful to have a~quantity, which would measure the distance of a~parametric matrix from singularity. The \textit{radius of regularity} of a~normalised parametric interval matrix with affine-linear dependencies, defined as (\cite{Kolev:2014:RRRER})
\begin{equation}
\label{eq:reg_rad_kolev}
r(A(\inum{p}))=\min\{r\geqslant 0\mid
\exists p\in\inum{p}\;A(rp)\textrm{ is singular}\},
\end{equation}
can be considered as such measure. Kolev \cite{Kolev:2014:RRRER} proposed an explicit formula for $r(A(\inum{p}))$, which can be useful to develop an algorithm for computing the radius of regularity, which is not \textit{a priori} exponential.

%
%
%
\section{Positive definiteness}
%
\label{sec:pos_def}
%
Positive definiteness of parametric interval matrices is important, e.g., for speeding up interval global optimisation. As it is well known, the convexity of a~function $f$ on a~box $\inum{x}$ can be verified by checking the positive definiteness of all Hessian matrices $\nabla^2f(x)$, where $x\in\inum{x}$. Hence, if $\nabla^2\inum{f}(\inum{x})$ is positive definite, then $f$ is strictly convex on $\inum{x}$.
\begin{definition}
A~parametric interval matrix $A(\inum{p})$ is strongly positive definite (semi-definite) if $A(p)$ is positive definite (semi-definite) for each $p\in\inum{p}$.
\end{definition}

\begin{definition}
A~parametric interval matrix $A(\inum{p})$ is weakly positive definite (semi-definite) if $A(p)$ is positive definite (semi-definite) for at least one  $p\in\inum{p}$.
\end{definition}

The problem of checking whether a~parametric interval matrix is strongly positive definite is co-NP hard (cf. \cite{KreinEtAl:1998:CCF}). Some sufficient and necessary conditions for the positive definiteness of parametric interval matrices with affine-linear dependencies were given by Hlad{\'i}k \cite{Hladik:2017:PSPDLPM}. They are especially useful when the number of parameters is small enough. For symmetric matrices, he provided a~verifiable sufficient condition, which relies on checking the positive definiteness of some specific real matrix. Below, we present another verifiable sufficient condition for checking positive definiteness, which does not assume the symmetry of the parametric interval matrix. The presented result generalises the result for interval matrices (cf. \cite{Rohn:1994:PDS}).
\begin{theorem}
\label{thm:strposdef_suffcond}
Let $A(\inum{p})$ be a~parametric interval matrix such that $A(\Mid{p})$ is positive definite and (\ref{eq:reg_rho_cond}) holds true. Then $A(\inum{p})$ is strongly positive definite.
\end{theorem}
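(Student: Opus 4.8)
The plan is to reduce the statement to the regularity result already established. First I would invoke Theorem~\ref{thm:reg_rho_cond}: since $A(\Mid{p})$ is nonsingular (being positive definite) and (\ref{eq:reg_rho_cond}) holds, the family $A(\inum{p})$ is regular, i.e. $A(p)$ is nonsingular for every $p\in\inum{p}$. The remaining task is to upgrade ``nonsingular everywhere'' to ``positive definite everywhere'', and the natural tool is a continuity and connectedness argument anchored at the midpoint, where positive definiteness is assumed.

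Concretely, I would argue by contradiction. Suppose $A(\inum{p})$ is not strongly positive definite, so there is some $p^{*}\in\inum{p}$ for which $A(p^{*})$ fails to be positive definite. Because $\inum{p}$ is a box, it is convex, hence the segment $p(t)=(1-t)\Mid{p}+t\,p^{*}$, $t\in[0,1]$, stays inside $\inum{p}$, and $t\mapsto A(p(t))$ is continuous. Positive definiteness of a real matrix is equivalent to positive definiteness of its symmetric part, so I would track the scalar function $\varphi(t)=\lambda_{\min}\big(\tfrac{1}{2}(A(p(t))+A(p(t))^{T})\big)$, which is continuous in $t$ by continuity of eigenvalues. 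By hypothesis $\varphi(0)>0$, while the failure at $p^{*}$ gives $\varphi(1)\leqslant 0$; the intermediate value theorem then yields a parameter $\tilde p\in\inum{p}$ at which the symmetric part is positive semidefinite and singular, i.e. there is $x\neq 0$ with $\tfrac{1}{2}(A(\tilde p)+A(\tilde p)^{T})x=0$ and hence $x^{T}A(\tilde p)x=0$.

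The main obstacle is the final step: turning this ``boundary'' parameter $\tilde p$ into a genuine contradiction with regularity. This is immediate when the matrices are symmetric, for then the symmetric part equals $A(\tilde p)$ itself, so its singularity directly contradicts the regularity supplied by Theorem~\ref{thm:reg_rho_cond}; since the motivating application is convexity testing via Hessians $\nabla^{2}f$, which are symmetric, this is the case of primary interest. For a genuinely non-symmetric family, however, the identity $x^{T}A(\tilde p)x=0$ only forces the \emph{symmetric part} of $A(\tilde p)$ to be singular, not $A(\tilde p)$ itself, so I would expect to need the Beeck-type bound applied to the symmetrised data, replacing each $A^{(k)}$ by $A^{(k)}_{s}=\tfrac{1}{2}(A^{(k)}+(A^{(k)})^{T})$ and $A(\Mid{p})$ by its symmetric part $A_{s}(\Mid{p})=\tfrac{1}{2}(A(\Mid{p})+A(\Mid{p})^{T})$. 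Verifying $\rho\big(|A_{s}(\Mid{p})^{-1}|\sum_{k}|A^{(k)}_{s}|p^{\Delta}_{k}\big)<1$, rather than the stated condition on $|A^{(k)}|$ and $A(\Mid{p})$, is exactly the delicate point, since $|A^{(k)}_{s}|$ need not be dominated by $|A^{(k)}|$ entrywise; this is where I would concentrate the effort.
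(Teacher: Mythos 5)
Your first step is exactly the paper's: condition (\ref{eq:reg_rho_cond}) plus Theorem \ref{thm:reg_rho_cond} gives regularity of $A(\inum{p})$. The divergence is in the second step. The paper finishes by citation: it invokes Theorem 10 of \cite{Hladik:2017:PSPDLPM}, which says that a regular parametric interval matrix with positive definite midpoint is strongly positive definite. You instead prove that implication yourself, by the segment-plus-intermediate-value-theorem argument on $\lambda_{\min}$ of the symmetric part. For symmetric parametric matrices (in particular for the Hessian application motivating the theorem) your argument is complete and correct, and it is essentially the proof that underlies the cited result, so you obtain a self-contained proof where the paper outsources the key step.

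The obstacle you hit in the non-symmetric case is not a weakness of your argument; it is a genuine flaw in the theorem as stated. The imported implication ``regular and $A(\Mid{p})$ positive definite $\Rightarrow$ strongly positive definite'' holds in the symmetric setting (where your argument closes it, and in which the cited result of \cite{Hladik:2017:PSPDLPM} is formulated), but it fails without symmetry, precisely because --- as you observe --- the symmetric part $\tfrac12\bigl(A(p)+A(p)^T\bigr)$ of a member need not itself belong to the parametric family; this closure property is what rescues the interval-matrix case of Rohn \cite{Rohn:1994:PDS}, and it is lost under parametric dependencies. In fact, with positive definiteness understood as $x^TAx>0$ for all $x\neq 0$ (Rohn's definition, which the paper claims to generalise), the theorem itself is false for non-symmetric data: take $K=1$, $\inum{p}=[-1,1]$,
$$
A^{(0)}=I,\qquad A^{(1)}=\begin{pmatrix}0 & 4\\[2pt] 1/5 & 0\end{pmatrix}.
$$
Then $A(\Mid{p})=I$ is positive definite and $\rho\left(\left|A(\Mid{p})^{-1}\right|B\right)=\rho\left(\left|A^{(1)}\right|\right)=\sqrt{4/5}<1$, so both hypotheses hold; yet for $x=(1,-1)^T$ one gets $x^TA(1/2)\,x=1-2-\tfrac{1}{10}+1=-\tfrac{1}{10}<0$, so $A(1/2)$ is not positive definite. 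No contradiction with regularity arises, since $\det A(p)=1-\tfrac45p^2>0$ throughout --- which is exactly the escape route you identified. Moreover, the symmetrised data violate the Beeck-type bound here ($\rho(|A^{(1)}_s|)=2.1>1$), confirming your point that condition (\ref{eq:reg_rho_cond}) on the original coefficients does not control the symmetrised family. So the ``delicate point'' on which you proposed to concentrate cannot be overcome: the correct resolution is to state the theorem (and your proof) for symmetric $A(\inum{p})$, or to impose (\ref{eq:reg_rho_cond}) on the symmetrised coefficients $A^{(k)}_s$ and midpoint, exactly as you suggest.
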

\begin{proof}
The condition (\ref{eq:reg_rho_cond}) guarantees the regularity of $A(\inum{p})$. Since $A(\Mid{p})$ is positive definite and $A(\inum{p})$ is regular, hence (Theorem 10, \cite{Hladik:2017:PSPDLPM}) $A(\inum{p})$ is strongly positive definite.
\end{proof} 
\begin{example}
Hlad{\'i}k \cite{Hladik:2017:PSPDLPM} considered cubic forms, since the entries of the Hessian of a~cubic form are affine-linear functions of variables. However, using the affine transformation, we can extend the applicability of the above theorem to any computable function. Let us consider the following function of $x=(x_1,x_2,x_3)$ (cf.~\cite{Hladik:2017:PSPDLPM}):
$$
f(x)=x_1^4+2x_1^2x_2-x_1x_2x_3+3x_2x_3^2+5x_2^3
$$
with $x\in\inum{x}=([2,3],[1,2],[0,1])$. The Hessian matrix
$$
\nabla^2f(x_1,x_2,x_3)=
\left(
\begin{array}{ccc}
12x_1^2+4x_2\; & 4x_1-x_3\; & -x_2 \\
4x_1-x_3\; & 30x_2\; & 6x_3-x_1 \\
-x_2\; & 6x_3-x_1\; & 6x_2
\end{array}
\right).
$$
The affine transformation of $\nabla^2f(x_1,x_2,x_3)$ yields a~parametric matrix with affine-linear dependencies
$$
A(e)=
\left(
\begin{array}{ccc}
82.5+30\varepsilon_1+2\varepsilon_2+1.5\varepsilon_4\;\; & 9.5+2\varepsilon_1-0.5\varepsilon_3\; & -1.5-0.5\varepsilon_2 \\
9.5+2\varepsilon_1-0.5\varepsilon_3\; & 45+15\varepsilon_2\; & 0.5-0.5\varepsilon_1+3\varepsilon_3 \\
-1.5-0.5\varepsilon_2\; & 0.5-0.5\varepsilon_1+3\varepsilon_3\; & 9+3\varepsilon_2
\end{array}
\right).
$$
Since $A(\Mid{p})=A^{(0)}$ is positive definite with eigenvalues (rounded to two decimal places) $(8.96,42.75,84.79)$ and $\rho(|(A^{(0)})^{-1}|B)\approx 0.61<1$, hence, by Theorem \ref{thm:strposdef_suffcond}, $\nabla^2\inum{f}(\inum{x})$ is positive definite, and thus $f$ is strictly convex on~$\inum{x}$.
\end{example}

%
\section{Stability}
%
\label{sec:stability}
A~real square matrix $A$ is called \textit{stable matrix} (also \textit{Hurwitz matrix}) if all its eigenvalues has strictly negative real part, i.e., if for each eigenvalue $\lambda_i$ of $A$, $\mathrm{Re}(\lambda_i)<0$. The problem of stability of interval matrices is strictly connected with the behaviour of a~linear time invariant system $\dot{x}(t)=Ax(t)$ under perturbation (see, e.g., \cite{Mansour:1989:RSIM} and the references therein). As in \cite{Rohn:1994:PDS}, we investigate here mainly stability of symmetric parametric interval matrices. The great advantage of the parametric approach is that a~parametric interval matrix contains only symmetric matrices, whereas an~interval matrix can contain also non-symmetric matrices, whose eigenvalues might not be real, and therefore requires some additional care~\cite{Rohn:1994:PDS}.

\begin{definition}
A~parametric interval matrix $A(\inum{p})$ is stable if, for each $p\in\inum{p}$, $A(p)$ is stable.
\end{definition}

\begin{lemma}[cf. \cite{Fiedler:1986:SMTANM}]
\label{lem:posdef_stab_equiv}
A~real symmetric matrix $A$ is stable if and only if $-A$ is positive definite.
\end{lemma}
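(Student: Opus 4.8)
The plan is to exploit the fact that a real symmetric matrix is orthogonally diagonalisable with a purely real spectrum, which collapses the Hurwitz condition on the real parts of the eigenvalues into a plain sign condition on the eigenvalues themselves. Throughout I would denote the eigenvalues of $A$ by $\lambda_1,\ldots,\lambda_n$.

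First I would invoke the spectral theorem: since $A$ is real and symmetric, all of its eigenvalues $\lambda_1,\ldots,\lambda_n$ are real. Consequently the defining requirement of stability, namely $\mathrm{Re}(\lambda_i)<0$ for every $i$, is equivalent to the statement $\lambda_i<0$ for every $i$, because the imaginary parts vanish.

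Next I would observe that $-A$ is again real and symmetric, and that its eigenvalues are precisely $-\lambda_1,\ldots,-\lambda_n$. A symmetric matrix is positive definite exactly when all of its eigenvalues are strictly positive (this is the standard eigenvalue characterisation of positive definiteness, e.g. via the Rayleigh quotient $x^{\mathsf T}(-A)x$). Applied to $-A$, positive definiteness therefore reads $-\lambda_i>0$ for every $i$, that is, $\lambda_i<0$ for every $i$.

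Chaining the two equivalences yields the claim in both directions at once: $A$ is stable $\iff$ $\lambda_i<0$ for all $i$ $\iff$ $-A$ is positive definite. I do not anticipate any genuine obstacle here; the only substantive ingredients are the realness of the spectrum of a symmetric matrix and the eigenvalue characterisation of positive definiteness, both of which are classical (which is why the statement is attributed to Fiedler rather than proved from scratch). The one point worth stating carefully is that the equivalence $\mathrm{Re}(\lambda_i)<0 \Leftrightarrow \lambda_i<0$ relies essentially on symmetry, so that the argument would not extend verbatim to non-symmetric $A$.
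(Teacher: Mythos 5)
Your proof is correct: the paper itself gives no proof of this lemma, citing Fiedler's book instead, and your argument via the spectral theorem (real spectrum collapses $\mathrm{Re}(\lambda_i)<0$ to $\lambda_i<0$) combined with the eigenvalue characterisation of positive definiteness applied to $-A$ is exactly the standard, complete justification of this classical fact. Your closing remark that the equivalence fails without symmetry is also apt, and matches the paper's own motivation for restricting to symmetric parametric interval matrices in the stability sections.
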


In the first theorem we give some sufficient and necessary conditions for the stability of parametric interval matrices (cf. \cite{Rohn:1994:PDS}).
\begin{theorem}
\label{thm:stab_posdef}
Let $A(\inum{p})$ be a~symmetric parametric interval matrix. Then, the following assertions are equivalent:
\begin{itemize}
\item[(i)] $A(\inum{p})$ is stable,
\item[(ii)] $A(p)$ is stable for each $p$ such that $p_k\in
\{\underline p_k,\overline p_k\}$ (vertex property),
\item[(iii)] $-A(\inum{p})$ is strongly positive definite,
\end{itemize}
\end{theorem}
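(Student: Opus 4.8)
The plan is to split the three-way equivalence into a cheap part and a substantive part. The equivalence (i) $\Leftrightarrow$ (iii) and the implication (i) $\Rightarrow$ (ii) follow directly from the definitions and Lemma~\ref{lem:posdef_stab_equiv}, so the real content lies in the implication (ii) $\Rightarrow$ (i).

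First I would dispatch (i) $\Leftrightarrow$ (iii). Since $A(\inum{p})$ is symmetric, every realisation $A(p)$, $p\in\inum{p}$, is a real symmetric matrix, and Lemma~\ref{lem:posdef_stab_equiv} applies pointwise: $A(p)$ is stable if and only if $-A(p)$ is positive definite. Quantifying over all $p\in\inum{p}$ and comparing with the definitions of stability of $A(\inum{p})$ and of strong positive definiteness of $-A(\inum{p})$ yields the equivalence at once. The implication (i) $\Rightarrow$ (ii) is trivial, because every vertex $p$ (with each $p_k\in\{\underline p_k,\overline p_k\}$) lies in $\inum{p}$.

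For the core implication (ii) $\Rightarrow$ (i) I would exploit the affine-linear dependence (\ref{eq:pim_ldep}) together with the variational characterisation of the largest eigenvalue. As each $A(p)$ is symmetric, its eigenvalues are real and $A(p)$ is stable precisely when its largest eigenvalue is negative. By the Rayleigh quotient,
\[
\lambda_{\max}\bigl(A(p)\bigr)=\max_{\|x\|_2=1}x^{T}A(p)x,
\]
and for a fixed unit vector $x$ the form $x^{T}A(p)x=x^{T}A^{(0)}x+\sum_{k=1}^{K}\bigl(x^{T}A^{(k)}x\bigr)p_k$ is affine-linear in $p$. Hence $\lambda_{\max}(A(\cdot))$, being a pointwise maximum of affine-linear functions, is convex on $\inum{p}$.

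The final step rests on the fact that a convex function on the box $\inum{p}$ attains its maximum at a vertex, so
\[
\max_{p\in\inum{p}}\lambda_{\max}\bigl(A(p)\bigr)=\max_{\textrm{vertex }v}\lambda_{\max}\bigl(A(v)\bigr).
\]
If (ii) holds, the right-hand side is negative, whence $\lambda_{\max}(A(p))<0$ for all $p\in\inum{p}$, which is exactly (i). The main obstacle is precisely this convexity reduction: it is where the affine-linearity of (\ref{eq:pim_ldep}) is indispensable, and for genuinely nonlinear dependencies the vertex property would cease to be sufficient.
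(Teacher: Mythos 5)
Your proof is correct, but it takes a genuinely different route from the paper's. The paper closes the equivalence via the cycle (i)$\Rightarrow$(ii)$\Rightarrow$(iii), with (i)$\Leftrightarrow$(iii) from Lemma~\ref{lem:posdef_stab_equiv}, and its substantive step (ii)$\Rightarrow$(iii) is not argued from scratch: having observed that $-A(p)$ is positive definite at every vertex $p$, it simply invokes the vertex characterisation of strong positive definiteness of symmetric linear parametric interval matrices (Theorem~7 of Hlad\'{i}k \cite{Hladik:2017:PSPDLPM}). You instead prove (ii)$\Rightarrow$(i) directly and self-containedly: the Rayleigh-quotient representation exhibits $\lambda_{\max}(A(\cdot))$ as a pointwise maximum of functions affine-linear in $p$, hence convex on $\inum{p}$, and a convex function on a box attains its maximum at a vertex; no continuity argument is even needed, since any $p\in\inum{p}$ is a convex combination of vertices and Jensen's inequality gives $\lambda_{\max}(A(p))\leqslant\max_v\lambda_{\max}(A(v))<0$. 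What your route buys is independence from the cited external result; indeed, your argument is essentially the same mechanism the paper itself deploys later in the proof of Theorem~\ref{thm:schur_stab1} for Schur stability, where the sign vector $s_k=\mathrm{sgn}(x^TA^{(k)}x)$ plays the role of selecting the maximising vertex. What the paper's route buys is brevity and making positive definiteness (iii) the pivot of the equivalence. One caveat common to both proofs: the vertex step hinges on the affine-linear structure (\ref{eq:pim_ldep}), which you rightly flag as indispensable, although the theorem statement leaves that assumption implicit.
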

\begin{proof}
By Lemma \ref{lem:posdef_stab_equiv} (i)$\Leftrightarrow$(iii). So, it is enough to prove (i)$\Rightarrow$(ii)$\Rightarrow$(iii).

(i)$\Rightarrow$(ii) The proof is obvious, since for $A(p)$ such that $p_k\in\{\underline p_k,\overline p_k\}$, it holds that $A(p)\in A(\inum{p})$.

(ii)$\Rightarrow$(iii) Let $p$ be such that $p_k\in\{\underline p_k,\overline p_k\}$. Then $A(p)$ is stable, and thus, by Lemma \ref{lem:posdef_stab_equiv}, $-A(p)$ is positive definite. Since $p$ was chosen arbitrarily, therefore $-A(\inum{p})$ is strongly positive definite (cf. Theorem 7, \cite{Hladik:2017:PSPDLPM}).
\end{proof}

In the next theorem we formulate another sufficient and necessary condition, which gives a~link between stability and regularity of symmetric parametric interval matrix (cf. \cite{Rohn:1994:PDS}).
\begin{theorem}
\label{thm:stab_reg_relation}
A~symmetric parametric interval matrix $A(\inum{p})$ is stable if and only if the following assertions hold:
\begin{itemize}
\item[(i)] $A(p)$ is stable for an arbitrary $p\in\inum{p}$,
\item[(ii)] $A(\inum{p})$ is regular.
\end{itemize}
\end{theorem}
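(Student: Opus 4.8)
The plan is to prove the two directions separately, using the equivalence between stability and positive definiteness (Lemma~\ref{lem:posdef_stab_equiv}) together with Theorem~\ref{thm:stab_posdef} as the main tools. The forward direction is immediate: if $A(\inum{p})$ is stable, then by definition every $A(p)$ with $p\in\inum{p}$ has all eigenvalues with strictly negative real part, which in particular gives (i) for any chosen $p$; moreover, a~stable matrix is nonsingular (zero is not an eigenvalue), so every $A(p)$ is nonsingular and hence $A(\inum{p})$ is regular, giving (ii).

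The substantive direction is the converse: assuming (i) and (ii), I would deduce stability. First I would fix the distinguished point $p_0\in\inum{p}$ for which $A(p_0)$ is stable. By Lemma~\ref{lem:posdef_stab_equiv}, $-A(p_0)$ is positive definite, so all eigenvalues of the symmetric matrix $A(p_0)$ are real and strictly negative. The key idea is a~continuity/connectedness argument: since $\inum{p}$ is a~box (hence connected) and the entries $A_{ij}$ are continuous on $\inum{p}$, the eigenvalues of $A(p)$ vary continuously with $p$. If some $A(p_1)$ were not stable, then along a~path in $\inum{p}$ from $p_0$ to $p_1$ at least one real eigenvalue would have to pass from negative to nonnegative, and by the intermediate value theorem it would vanish at some intermediate point $p^*$. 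But then $A(p^*)$ would be singular, contradicting regularity (ii).

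The main obstacle, and the point requiring care, is justifying that the eigenvalues remain \emph{real} along the entire path, so that the sign-change argument via the intermediate value theorem is legitimate. This is exactly where the symmetry hypothesis is essential: because $A(\inum{p})$ is a~symmetric parametric interval matrix, every $A(p)$ is a~real symmetric matrix and thus has only real eigenvalues throughout $\inum{p}$. Consequently an eigenvalue can leave the open left half-plane only by crossing zero along the real axis, never by an imaginary excursion; this is precisely what rules out the complex-eigenvalue pathology that, as noted in the surrounding discussion, complicates the non-symmetric interval-matrix setting. With reality secured, the intermediate value theorem applies to the continuous real eigenvalue functions and forces a~singular matrix at the crossing, contradicting~(ii).

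I would conclude by noting that once stability is re-established for all $p\in\inum{p}$, the equivalence with (iii) of Theorem~\ref{thm:stab_posdef} is available for free, so this theorem dovetails with the preceding one. The whole argument reduces to a~one-parameter continuity statement along a~segment of the box, and the symmetry assumption is what makes that reduction valid.
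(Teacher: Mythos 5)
Your proof is correct, but it takes a different route from the paper's. For the converse direction the paper stays at the level of cited results: from (i) and Lemma~\ref{lem:posdef_stab_equiv} it gets that $-A(p)$ is positive definite for one $p$, notes that regularity of $A(\inum{p})$ gives regularity of $-A(\inum{p})$, and then invokes Theorem~7 of Hlad{\'i}k's paper (a regular parametric interval matrix containing at least one positive definite member is strongly positive definite) as a black box, finishing with the equivalence (i)$\Leftrightarrow$(iii) of Theorem~\ref{thm:stab_posdef}. What you do instead is prove the needed fact from scratch: a continuity-plus-connectedness argument in which the ordered (real) eigenvalues of the symmetric matrices $A(p)$ vary continuously along a segment of the box, so an eigenvalue leaving the open left half-line would have to vanish at some intermediate parameter value, producing a singular matrix and contradicting (ii). This is essentially the standard proof of the very result the paper cites, so the underlying idea coincides, but your version is self-contained, avoids the external reference and the detour through positive definiteness entirely, and makes explicit the one place where symmetry is genuinely used (eigenvalues are real, so they cannot escape the half-plane through an imaginary crossing) --- a point the paper's proof leaves implicit inside the citation. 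The trade-off is length: the paper's argument is three lines and fits its pattern of reducing everything to Hlad{\'i}k's and Rohn's results, while yours carries the topological burden itself; both are valid, and your closing remark that Theorem~\ref{thm:stab_posdef}(iii) then comes for free is accurate but not needed for the statement being proved.
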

\begin{proof}
The ``only if'' part is obvious, since each stable matrix is non-singular. Conversely, if $A(p)$ is stable, then $-A(p)$ is positive definite, and since $A(inum{p})$ is regular, hence $-A(\inum{p})$ is regular as well. Thus, (Theorem 7, \cite{Hladik:2017:PSPDLPM}) $-A(\inum{p})$ is strongly positive definite, and hence $A(\inum{p})$ is stable by Theorem~\ref{thm:stab_posdef}.
\end{proof}

\begin{theorem}
\label{thm:stab_suffcond}
Let $A(\inum{p})$ be a~symmetric parametric interval matrix such that $A(\Mid{p})$ is stable and (\ref{eq:reg_rho_cond}) holds true. Then $A(\inum{p})$ is stable.
\end{theorem}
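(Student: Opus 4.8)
The plan is to reduce this statement directly to the characterisation of stability already established in Theorem~\ref{thm:stab_reg_relation}, which states that a symmetric parametric interval matrix is stable precisely when it is regular and contains at least one stable member. Both of these ingredients are essentially supplied by the hypotheses, so the proof should amount to a short assembly of earlier results rather than any new computation.

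First I would observe that the midpoint $\Mid{p}$ lies in $\inum{p}$, so $A(\Mid{p})\in A(\inum{p})$, and by assumption $A(\Mid{p})$ is stable. This verifies condition~(i) of Theorem~\ref{thm:stab_reg_relation} with the particular choice $p=\Mid{p}$, the point required to be ``arbitrary'' in that theorem. Next, because condition (\ref{eq:reg_rho_cond}) is assumed to hold, Theorem~\ref{thm:reg_rho_cond} immediately guarantees that $A(\inum{p})$ is regular, which is condition~(ii). Invoking Theorem~\ref{thm:stab_reg_relation} then yields that $A(\inum{p})$ is stable, completing the argument.

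I do not anticipate any genuine obstacle here, since the statement is in effect a corollary of the machinery already assembled; the only point deserving a moment's attention is that the ``arbitrary point'' in Theorem~\ref{thm:stab_reg_relation}(i) may legitimately be taken to be the midpoint, which is valid precisely because $\Mid{p}\in\inum{p}$. As an alternative route that parallels the positive-definiteness theorem, one could argue through Lemma~\ref{lem:posdef_stab_equiv}: stability of $A(\Mid{p})$ gives positive definiteness of $-A(\Mid{p})$, and since replacing $A$ by $-A$ leaves both $|A(\Mid{p})^{-1}|$ and $B$ unchanged, condition (\ref{eq:reg_rho_cond}) holds verbatim for $-A(\inum{p})$. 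Theorem~\ref{thm:strposdef_suffcond} then shows $-A(\inum{p})$ is strongly positive definite, whence $A(\inum{p})$ is stable by the equivalence (i)$\Leftrightarrow$(iii) of Theorem~\ref{thm:stab_posdef}. Either path is short; I would present the first as primary, as it is the most economical.
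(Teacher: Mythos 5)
Your proposal is correct, and your primary route is genuinely different from the paper's --- the paper's proof is, in fact, exactly your alternative: it consists of the single line that the theorem follows from Theorem~\ref{thm:strposdef_suffcond} applied to $-A(\inum{p})$, leaving implicit both the use of Lemma~\ref{lem:posdef_stab_equiv} (stability of $A(\Mid{p})$ gives positive definiteness of $-A(\Mid{p})$) and the sign-invariance of condition (\ref{eq:reg_rho_cond}) (the matrices defining $-A(\inum{p})$ are $-A^{(0)},\ldots,-A^{(K)}$, and the absolute values in (\ref{eq:reg_rho_cond}) make the signs irrelevant). Your primary argument instead extracts regularity from Theorem~\ref{thm:reg_rho_cond} --- correctly noting, in effect, that the non-singularity of $A(\Mid{p})$ required there is automatic, since a stable matrix has no zero eigenvalue --- and then invokes Theorem~\ref{thm:stab_reg_relation}, whose condition (i) you rightly observe is satisfied by the single choice $p=\Mid{p}\in\inum{p}$. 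Unfolded, both routes rest on the same machinery (Hlad{\'i}k's result that regularity plus one positive definite member forces strong positive definiteness, together with the stability/definiteness equivalence), so neither is more general; but yours makes the logical skeleton explicit (stability $=$ regularity $+$ one stable member) and spares the reader the verification that (\ref{eq:reg_rho_cond}) transfers verbatim to $-A(\inum{p})$, whereas the paper's is marginally more economical at the cost of leaving those two checks to the reader.
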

\begin{proof}
The theorem follows directly from Theorem \ref{thm:strposdef_suffcond} applied to $-A(\inum{p})$.
\end{proof}

%
\section{Schur stability}
%
\label{sec:shur_stability}
A~real square matrix $A$ is called \textit{Schur stable} if all its eigenvalues lie in a~unit circle, i.e., if $|\lambda_i|<1$, for each eigenvalue $\lambda_i$ of $A$. Schur matrices are strongly connected with asymptotic stability of polynomials and dynamical systems. Similar as in the previous section, we consider here only symmetric parametric interval matrices (cf. \cite{Rohn:1994:PDS}).

\begin{definition}
A~parametric interval matrix $A(\inum{p})$ is Schur stable if, for each $p\in\inum{p}$, $A(p)$ is Schur stable.
\end{definition}

The first theorem we present gives necessary and sufficient condition for a~symmetric parametric interval matrix to be Schur stable. Similar result for interval matrices was given in \cite{Hertz:1992:EESRSIIM}.
\begin{theorem}
\label{thm:schur_stab1}
Let $A(\inum{p})$ be a~symmetric parametric interval matrix. Then, the following assertions are equivalent:
\begin{itemize}
\item[(i)] $A(\inum{p})$ is Schur stable,
\item[(ii)] $A(p)$ is Schur stable for each $p$ such that $p_k\in\{\underline p_k,\overline p_k\}$.
\end{itemize}
\end{theorem}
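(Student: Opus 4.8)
The plan is to reduce Schur stability of symmetric matrices to positive definiteness and then invoke the vertex characterisation of strong positive definiteness already exploited in the proof of Theorem~\ref{thm:stab_posdef}. The implication (i)$\Rightarrow$(ii) is immediate, since every vertex matrix $A(p)$ with $p_k\in\{\underline p_k,\overline p_k\}$ belongs to the family $A(\inum{p})$; thus the whole content lies in proving (ii)$\Rightarrow$(i).

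First I would record the elementary fact that, for a~real symmetric matrix $A$, all eigenvalues satisfy $|\lambda_i|<1$ if and only if both $I-A$ and $I+A$ are positive definite. Indeed, the eigenvalues of $I\mp A$ are $1\mp\lambda_i$, so positive definiteness of $I-A$ is equivalent to $\lambda_i<1$ for all $i$ and positive definiteness of $I+A$ to $\lambda_i>-1$ for all $i$; together these give $-1<\lambda_i<1$. Hence $A(p)$ is Schur stable exactly when $I-A(p)$ and $I+A(p)$ are positive definite.

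Next I would introduce the two auxiliary symmetric parametric interval matrices $B^-(\inum{p})=I-A(\inum{p})$ and $B^+(\inum{p})=I+A(\inum{p})$. Since $A(\inum{p})$ has affine-linear dependencies, so do $B^\pm(\inum{p})$ (only the constant term $A^{(0)}$ is replaced by $I\mp A^{(0)}$), and they are defined over the same parameter box $\inum{p}$; consequently their vertex matrices are precisely $I\mp A(p)$ for $p_k\in\{\underline p_k,\overline p_k\}$. By hypothesis (ii) each such $A(p)$ is Schur stable, so by the equivalence of the previous step both $I-A(p)$ and $I+A(p)$ are positive definite at every vertex. Applying the vertex characterisation of strong positive definiteness for symmetric parametric interval matrices (Theorem~7, \cite{Hladik:2017:PSPDLPM}) to $B^-$ and to $B^+$ separately, I conclude that both $B^-(\inum{p})$ and $B^+(\inum{p})$ are strongly positive definite, i.e.\ $I-A(p)$ and $I+A(p)$ are positive definite for \emph{every} $p\in\inum{p}$. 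By the elementary fact again, $A(p)$ is Schur stable for every $p\in\inum{p}$, which is (i).

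The main obstacle is not in my own reduction, which is short, but is absorbed into the vertex property for positive definiteness (Theorem~7 of \cite{Hladik:2017:PSPDLPM}). It is here that the affine-linear structure is essential: the smallest eigenvalue $\lambda_{\min}(A(p))=\min_{\|x\|=1}x^{\top}A(p)x$ is a~minimum of functions that are affine in $p$, hence concave in $p$, so over the box $\inum{p}$ it attains its minimum at a~vertex, and positivity at all vertices therefore forces positivity throughout. The only points I would verify carefully are that $I\mp A(\inum{p})$ genuinely inherit affine-linear dependencies over the same parameter box, so that their vertices coincide with those of $A(\inum{p})$, and that the strict inequality $|\lambda_i|<1$ corresponds to strict (rather than semi-) positive definiteness of $I\mp A(p)$.
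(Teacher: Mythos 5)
Your proof is correct, but it takes a different route from the paper's. The paper proves (ii)$\Rightarrow$(i) by a direct, self-contained Rayleigh-quotient estimate: for an arbitrary $A(p')\in A(\inum{p})$ and a unit vector $x$, it sets $s_k=\mathrm{sgn}(x^TA^{(k)}x)$, bounds $x^TA(p')x\leqslant x^TA(p)x$ where $p$ is the vertex $p_k=\Mid{p}_k+s_k\Rad{p}_k$ selected by those signs, and deduces $\lambda_{\max}(A(p'))\leqslant\lambda_{\max}(A(p))<1$ (and symmetrically $\lambda_{\min}(A(p'))>-1$). You instead reduce Schur stability of a symmetric matrix to positive definiteness of $I-A$ and $I+A$, form the two auxiliary parametric matrices $I\mp A(\inum{p})$, and invoke the vertex characterisation of strong positive definiteness (Theorem~7 of \cite{Hladik:2017:PSPDLPM}) as a black box. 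The two arguments rest on the same underlying mechanism---for affine-linear dependencies the quadratic form $x^TA(p)x$ is affine in $p$, so its extrema over the box sit at vertices---but your version is modular where the paper's is explicit: you get a two-line proof that reuses machinery the paper itself employs (the same Theorem~7 is cited in the proof of Theorem~\ref{thm:stab_posdef}), and your decomposition $I\mp A$ transparently anticipates the link with Hurwitz stability that the paper only establishes afterwards in Theorem~\ref{thm:schurstab_hurwitzstab}; the paper's version, by contrast, is self-contained and exhibits concretely which vertex witnesses each eigenvalue bound. One point worth noting: both arguments (yours and the paper's) tacitly require the affine-linear structure of the dependencies, which the theorem statement does not make explicit; you flag this correctly, and your checks that $I\mp A(\inum{p})$ inherit symmetry, affine-linearity and the same vertex set are exactly the right ones to make the reduction airtight.
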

\begin{proof}
The implication (i)$\Rightarrow$(ii) is obvious. To prove (ii)$\Rightarrow$(i), take arbitrary $A(p')\in A(\inum{p})$, $x\neq 0$, and put $s_k=\mathrm{sgn}(x^TA^{(k)}x)$. Since 
$$
\left|x^T(A(p')-A(\Mid{p}))x\right|\leqslant
\sum_{k=1}^K\left|x^TA^{(k)}x\right|\Rad{p}_k,
$$ 
hence 
\begin{flalign*}
x^TA(p')x &=x^TA(\Mid{p})x+x^T(A(p')-A(\Mid{p}))x\leqslant \\
& \leqslant x^TA(\Mid{p})x+\sum_{k=1}^K\left|x^TA^{(k)}x\right|\Rad{p}_k\leqslant \\
& \leqslant x^T\left(A(\Mid{p})+\sum_{k=1}^KA^{(k)}s_k\Rad{p}_k\right)x,
\end{flalign*}
and we have
$$
\frac{x^TA(p')x}{x^Tx}\leqslant \frac{x^T\left(A(\Mid{p})+\sum_{k=1}^KA^{(k)}s_k\Rad{p}_k\right)x}{x^Tx}=
\frac{x^TA(p)x}{x^Tx},
$$
where $p$ is such that $p_k\in\{\underline p_k,\overline p_k\}$, which implies that
$$
\lambda_{\max}(A(p'))\leqslant\lambda_{\max}(A(p))<1.
$$
Analogically, we can prove that $\lambda_{\min}(A(p'))>-1$, and hence $A(p')$ is Schur stable. Since $A(p')$ was chosen arbitrarily, thus $A(\inum{p})$ is Schur stable.
\end{proof}

The next theorem gives another sufficient and necessary condition for Schur stability of symmetric parametric interval matrices. It also links Schur stability and Hurwiz stability (cf. \cite{Rohn:1994:PDS}).

\begin{theorem}
\label{thm:schurstab_hurwitzstab}
Let $A(\inum{p})$ be a~symmetric parametric interval matrix. Then $A(\inum{p})$ is Schur stable if and only if the parametric interval matrices
$$
A(\inum{p})-I\quad\textrm{and}\quad-A(\inum{p})-I
$$
are stable.
\end{theorem}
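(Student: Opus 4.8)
The plan is to reduce the assertion to a pointwise (single-matrix) statement and then lift it back to the whole family, exploiting that both Schur stability and Hurwitz stability of a parametric interval matrix are ``for every $p\in\inum{p}$'' conditions.

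First I would record the elementary spectral fact. If $A$ is a real symmetric matrix with (necessarily real) eigenvalues $\lambda_1,\ldots,\lambda_n$, then $A-I$ has eigenvalues $\lambda_i-1$ and $-A-I$ has eigenvalues $-\lambda_i-1$, sharing the eigenvectors of $A$. Hence $A-I$ is stable iff $\lambda_i-1<0$ for all $i$, i.e. $\lambda_i<1$, while $-A-I$ is stable iff $-\lambda_i-1<0$ for all $i$, i.e. $\lambda_i>-1$. Taken together, $A-I$ and $-A-I$ are both stable iff $-1<\lambda_i<1$ for every $i$, which is precisely Schur stability of the symmetric matrix $A$. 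Equivalently, one may phrase this through Lemma \ref{lem:posdef_stab_equiv}: $A-I$ is stable iff $I-A$ is positive definite, $-A-I$ is stable iff $A+I$ is positive definite, and these two positive-definiteness conditions together are equivalent to $|\lambda_i|<1$ for all $i$.

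Next I would observe that $A(\inum{p})-I$ and $-A(\inum{p})-I$ are again symmetric parametric interval matrices: subtracting $I$ only shifts the constant term $A^{(0)}$, and negation replaces each $A^{(k)}$ by $-A^{(k)}$, both operations preserving symmetry of every member of the family. Moreover $A(\inum{p})-I=\{A(p)-I\mid p\in\inum{p}\}$ and $-A(\inum{p})-I=\{-A(p)-I\mid p\in\inum{p}\}$, so by definition $A(\inum{p})-I$ is stable iff $A(p)-I$ is stable for every $p\in\inum{p}$, and likewise $-A(\inum{p})-I$ is stable iff $-A(p)-I$ is stable for every $p\in\inum{p}$.

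Finally I would assemble the equivalence. By definition $A(\inum{p})$ is Schur stable iff $A(p)$ is Schur stable for every $p\in\inum{p}$; by the pointwise step this holds iff, for every $p\in\inum{p}$, both $A(p)-I$ and $-A(p)-I$ are stable; and by the family-level reformulation this is exactly the statement that $A(\inum{p})-I$ and $-A(\inum{p})-I$ are both stable. I do not anticipate a genuine obstacle here; the only points requiring care are the quantifier bookkeeping---the two ``for every $p$'' conditions must be combined in the right order---and the routine check that the shifted and negated families are still symmetric parametric interval matrices, so that it is legitimate to speak of their Hurwitz stability.
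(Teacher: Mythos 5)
Your proposal is correct and takes essentially the same route as the paper's proof: both arguments work pointwise for each $p\in\inum{p}$, using the spectral-shift fact that for a symmetric $A(p)$ the stability of $A(p)-I$ and of $-A(p)-I$ is equivalent to all eigenvalues of $A(p)$ lying in $(-1,1)$, and then lift the equivalence over the whole parameter box. Your write-up merely makes the quantifier bookkeeping and the symmetry of the shifted and negated families more explicit than the paper does.
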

\begin{proof}
To prove the ``only if'' part, take arbitrary $p\in\inum{p}$. Since $A(p)$ is Schur stable, hence it has all its eigenvalues in $(-1,1)$. Therefore, all eigenvalues of $A(p)-I$ lie in $(-2,0)$, and hence $A(\inum{p})-I$ is stable. To prove the stability of the matrix $-A(\inum{p})-I$, it is enough to observe that if $A(p)$ is Schur stable, then $-A(p)$ is Schur stable as well.

To prove the ``if'' part, take $A(p)\in A(\inum{p})$ and let $\lambda$ be its eigenvalue. Since $A(p)-I$ is stable, hence all its eigenvalues are negative, hence $\lambda-1<0$, which means that $\lambda<1$. Moreover, $-\lambda$ is an~eigenvalue of $-A(p)$, and since $-A(p)-I$ is stable, hence, using the same reasoning, we obtain that $\lambda>-1$. Since $A(p)$ was chosen arbitrarily, hence $A(\inum{p})$ is Schur stable.
\end{proof}

The next theorem gives a~sufficient condition for Schur stability of symmetric parametric interval matrices. The validity of the theorem follows directly from Theorem \ref{thm:stab_suffcond} and Theorem \ref{thm:schurstab_hurwitzstab}.
\begin{theorem}
\label{thm:schurstab_suffcond}
Let $A(\inum{p})$ be a~symmetric parametric interval matrix such that $A(\Mid{p})$ is Schur stable and the (\ref{eq:reg_rho_cond}) holds true. Then $A(\inum{p})$ is Schur stable.
\end{theorem}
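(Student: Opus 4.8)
The plan is to reduce the Schur-stability claim to two applications of the Hurwitz-stability criterion of Theorem~\ref{thm:stab_suffcond}, using the equivalence established in Theorem~\ref{thm:schurstab_hurwitzstab}. By that theorem, $A(\inum{p})$ is Schur stable precisely when the two symmetric parametric interval matrices $A(\inum{p})-I$ and $-A(\inum{p})-I$ are Hurwitz stable, so it suffices to verify the hypotheses of Theorem~\ref{thm:stab_suffcond} for each of them.

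First I would record how the affine-linear data transform under the shift by $\pm I$. Writing $A(\inum{p})$ in the form (\ref{eq:pim_ldep}), the family $A(\inum{p})-I$ keeps the same coefficient matrices $A^{(k)}$, $k=1,\ldots,K$, and only its constant term $A^{(0)}$ is replaced by $A^{(0)}-I$; likewise $-A(\inum{p})-I$ has coefficient matrices $-A^{(k)}$ and constant term $-A^{(0)}-I$. Consequently the radius matrix $B=\sum_{k=1}^K|A^{(k)}|\Rad{p}_k$ is unchanged for both shifted families (using $|-A^{(k)}|=|A^{(k)}|$), while the midpoint matrices become $A(\Mid{p})-I$ and $-A(\Mid{p})-I=-(A(\Mid{p})+I)$, respectively.

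Next I would check the midpoint hypothesis. Since $A(\Mid{p})$ is symmetric and Schur stable, all its eigenvalues lie in $(-1,1)$; hence the eigenvalues of $A(\Mid{p})-I$ lie in $(-2,0)$, and those of $-A(\Mid{p})-I$ also lie in $(-2,0)$, so both midpoint matrices are stable. This is exactly the eigenvalue-shifting observation already used in the proof of Theorem~\ref{thm:schurstab_hurwitzstab}. With midpoint stability and the invariance of $B$ in hand, Theorem~\ref{thm:stab_suffcond} applied to $A(\inum{p})-I$ and to $-A(\inum{p})-I$ yields the stability of both families, and the desired conclusion then follows from Theorem~\ref{thm:schurstab_hurwitzstab}.

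The step I expect to require the most care is the spectral-radius condition (\ref{eq:reg_rho_cond}) for the shifted families. Invoking Theorem~\ref{thm:stab_suffcond} for $A(\inum{p})-I$ needs $\rho\!\left(\left|(A(\Mid{p})-I)^{-1}\right|B\right)<1$, and for $-A(\inum{p})-I$ needs $\rho\!\left(\left|(A(\Mid{p})+I)^{-1}\right|B\right)<1$, whereas the hypothesis literally supplies only $\rho\!\left(\left|A(\Mid{p})^{-1}\right|B\right)<1$. Because the entrywise absolute value interacts nontrivially with the shift and with the spectral radius, these three quantities are not identical, so the clean way to close the argument is to read (\ref{eq:reg_rho_cond}) as asserted for the two shifted midpoint inverses (equivalently, for the families $I-A(\inum{p})$ and $I+A(\inum{p})$). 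Once that reading is fixed, the two invocations of Theorem~\ref{thm:stab_suffcond} complete the proof immediately.
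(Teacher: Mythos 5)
Your reduction is exactly the route the paper takes: the paper's entire proof is the one-line remark that the result ``follows directly from Theorem \ref{thm:stab_suffcond} and Theorem \ref{thm:schurstab_hurwitzstab}'', which is precisely the two-step argument you laid out, and your verification that the shifted midpoints $A(\Mid{p})-I$ and $-A(\Mid{p})-I$ are Hurwitz stable is correct. However, the obstacle you isolated in your final paragraph is not a loose end of your write-up; it is a genuine gap in that one-line argument, and in fact the theorem as literally stated is false.

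The point is that condition (\ref{eq:reg_rho_cond}) yields regularity of $A(\inum{p})$, i.e.\ it prevents an eigenvalue of any $A(p)$ from reaching $0$, whereas Schur stability requires that no eigenvalue reach $\pm 1$, i.e.\ regularity of the shifted families $A(\inum{p})-I$ and $A(\inum{p})+I$; the former does not imply the latter. A concrete counterexample to the stated theorem: take $n=K=1$, $A(p)=p$, $\inum{p}=[0.4,1.2]$. Then $A(\Mid{p})=0.8$ is symmetric and Schur stable, $B=|A^{(1)}|\Rad{p}_1=0.4$, and $\rho\left(\left|A(\Mid{p})^{-1}\right|B\right)=0.5<1$, so every hypothesis holds; yet $A(1.2)=1.2$ is not Schur stable (indeed $A(\inum{p})-I$ is singular at $p=1$). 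Invoking Theorem \ref{thm:stab_suffcond} for $A(\inum{p})-I$ and $-A(\inum{p})-I$ requires $\rho\left(\left|\left(A(\Mid{p})-I\right)^{-1}\right|B\right)<1$ and $\rho\left(\left|\left(A(\Mid{p})+I\right)^{-1}\right|B\right)<1$, and neither follows from (\ref{eq:reg_rho_cond}): in the example the first of these quantities equals $2$. Your proposed repair --- reading the hypothesis as asserting the spectral-radius condition for the two shifted midpoint matrices --- is exactly the correction the statement needs, and with that strengthened hypothesis your argument (midpoint stability of the shifted families, two applications of Theorem \ref{thm:stab_suffcond}, then Theorem \ref{thm:schurstab_hurwitzstab}) is complete and correct.
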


%
\section{Radius of stability}
%
\label{sec:rad_stab}
By analogy to the radius of stability of an~interval matrix (), we define the radius stability of a~parametric interval matrix as 
\begin{equation}
\label{eq:rad_of_stability}
s(A(\inum{p}))=\min\{r\geqslant 0\mid
\exists p\in\Mid{p}+r[-\Rad{p},\Rad{p}]\;A(p)\textrm{ is unstable}\}.
\end{equation}
Obviously, if $s(A(\inum{p}))=0$, then $A(\inum{p})$ is unstable, and if $s(A(\inum{p}))>1$, then $A(\inum{p})$ is stable.

\begin{proposition}
Let $A(\inum{p})$ be a~symmetric parametric interval matrix such that $A(\Mid{p})$ is stable. Then
\begin{equation}
s(A(\inum{p}))=r(A(\inum{p})).
\end{equation}
\end{proposition}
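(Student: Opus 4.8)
The plan is to read both radii off a single one-parameter inflation of the parameter box and to compare them through the top eigenvalue of the symmetric matrices in the family. Write $\inum{p}_r=\Mid{p}+r[-\Rad{p},\Rad{p}]$, so that $s(A(\inum{p}))$ is the least $r$ for which $\inum{p}_r$ contains a parameter $p$ with $A(p)$ unstable, while $r(A(\inum{p}))$ --- reading (\ref{eq:reg_rad_kolev}) in the normalised convention $\Mid{p}=0$ --- is the least $r$ for which $\inum{p}_r$ contains a parameter $p$ with $A(p)$ singular. Since every $A(p)$ is symmetric its spectrum is real, and I will use repeatedly that $A(p)$ is stable exactly when $\lambda_{\max}(A(p))<0$ and singular exactly when $0$ lies in its spectrum. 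I would then prove $r(A(\inum{p}))\ge s(A(\inum{p}))$ and $r(A(\inum{p}))\le s(A(\inum{p}))$ in turn.

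First I would settle $r(A(\inum{p}))\ge s(A(\inum{p}))$, which is immediate. For any $r<s(A(\inum{p}))$ the definition of the radius of stability forces every $A(p)$ with $p\in\inum{p}_r$ to be stable; a stable symmetric matrix has all eigenvalues strictly negative and is therefore nonsingular. Hence no $p\in\inum{p}_r$ makes $A(p)$ singular when $r<s(A(\inum{p}))$, and the definition of the radius of regularity gives the stated inequality.

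For the reverse inequality I would introduce the auxiliary function $\mu(r)=\max_{p\in\inum{p}_r}\lambda_{\max}(A(p))$. Since $A(p)$ is affine-linear, hence continuous, in $p$, since $\lambda_{\max}$ depends continuously on a symmetric matrix, and since the box $\inum{p}_r$ is compact and varies continuously with $r$, the function $\mu$ is continuous and non-decreasing, and stability of $A(\Mid{p})$ gives $\mu(0)=\lambda_{\max}(A(\Mid{p}))<0$. As ``$\inum{p}_r$ contains an unstable matrix'' is precisely ``$\mu(r)\ge 0$'', the set defining $s(A(\inum{p}))$ equals $\{r\ge 0:\mu(r)\ge 0\}$, which is closed by continuity of $\mu$; thus the minimum $s:=s(A(\inum{p}))$ is attained with $\mu(s)\ge 0$, while $\mu(r)<0$ for all $r<s$ forces $\mu(s)\le 0$. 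Consequently $\mu(s)=0$, so a maximizer $p^\ast\in\inum{p}_s$ yields a symmetric matrix $A(p^\ast)$ whose largest eigenvalue equals $0$; this matrix is singular, whence $r(A(\inum{p}))\le s(A(\inum{p}))$.

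Combining the two inequalities gives the equality $s(A(\inum{p}))=r(A(\inum{p}))$. I expect the only genuinely delicate point to be the continuity of $\mu$, i.e. justifying that maximising the continuous top eigenvalue over the continuously inflating compact box $\inum{p}_r$ produces a continuous function of $r$. Once that is in hand, the substance of the proof is the geometric fact that, for a symmetric family, the passage from stability to instability can occur only through an eigenvalue crossing zero --- that is, exactly when a member of the family becomes singular --- and it is this that forces the two radii to coincide.
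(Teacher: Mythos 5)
Your proof is correct, but it takes a genuinely different route from the paper's. The paper disposes of the proposition in one line: it combines Theorem \ref{thm:stab_reg_relation} (a symmetric parametric interval matrix is stable if and only if one member is stable and the family is regular), applied to the inflated family with parameter box $\inum{p}_r=\Mid{p}+r[-\Rad{p},\Rad{p}]$ at each level $r\geqslant 0$, with Theorem 1 of \cite{Kolev:2014:RRRER} for the regularity-radius side; since $A(\Mid{p})$ is stable, instability and singularity then first appear at exactly the same inflation level. The substantive fact --- that a symmetric family can lose stability only through an eigenvalue reaching zero --- is thus hidden inside Theorem \ref{thm:stab_reg_relation}, whose proof rests in turn on Theorem 7 of \cite{Hladik:2017:PSPDLPM}. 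You prove that fact from scratch, which makes your argument self-contained: it needs neither Kolev's result nor Hladík's. The price is the continuity of $\mu(r)=\max_{p\in\inum{p}_r}\lambda_{\max}(A(p))$, which you flag but leave unproven; it is standard (reparametrize $p=\Mid{p}+r\delta$ with $\delta$ ranging over the fixed compact box $[-\Rad{p},\Rad{p}]$, so that $\mu$ becomes the maximum over a fixed compact set of a function jointly continuous in $(r,\delta)$), but you could avoid it altogether: whenever some $A(p^\ast)$ with $p^\ast\in\inum{p}_r$ is unstable, the map $t\mapsto\lambda_{\max}\left(A(\Mid{p}+t(p^\ast-\Mid{p}))\right)$ is continuous on $[0,1]$, negative at $t=0$ and nonnegative at $t=1$, so it vanishes at some $t^\ast$, and convexity of the box places the resulting singular matrix inside $\inum{p}_r$; this yields $r(A(\inum{p}))\leqslant r$ for every such $r$, hence $r(A(\inum{p}))\leqslant s(A(\inum{p}))$, with no attainment or continuity-in-$r$ issues at all.
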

\begin{proof}
The proposition follows directly from Theorem \ref{thm:stab_reg_relation} and Theorem 1 in~\cite{Kolev:2014:RRRER}.
\end{proof}
%
%


\end{document}